\documentclass[preprint,10pt]{elsarticle}
\usepackage{amssymb}
\usepackage{pifont}
\usepackage{amsfonts}
\usepackage{amsmath}
\usepackage{graphicx}
\usepackage{latexsym,amsthm}
\usepackage{hyperref}
\usepackage{combelow}
\usepackage{setspace}
\usepackage[left=1.3cm,top=1.75cm,right=1.3cm]{geometry}

\makeatletter
\def\ps@pprintTitle{%
  \let\@oddhead\@empty
  \let\@evenhead\@empty
  \def\@oddfoot{\reset@font\hfil\thepage\hfil}
  \let\@evenfoot\@oddfoot
}
\makeatother
\allowdisplaybreaks

\newtheorem{lemma}{Lemma}

\newtheorem{theorem}{Theorem}
\numberwithin{equation}{section}

\begin{document}

\begin{frontmatter}
\title{On King Type Modification of $(p,q)$ Baskakov Operators which preserves $x^2$}
\author[label1, label*]{Vishnu Narayan Mishra}
\ead{vishnunarayanmishra@gmail.com,
vishnu\_narayanmishra@yahoo.co.in}
\fntext[label*]{Corresponding author}
\author[label1]{Shikha Pandey}
\ead{sp1486@gmail.com}
\author[label2,label3]{Lakshmi Narayan Mishra}
\ead{lakshminarayanmishra04@gmail.com}
\address[label1]{Department of Applied Mathematics \& Humanities,
Sardar Vallabhbhai National Institute of Technology, Ichchhanath Mahadev Dumas Road, Surat -395 007 (Gujarat), India}
\address[label2] {Department of Mathematics, National Institute of Technology, Silchar-788010, Assam, India.}
\address[label3]{L. 1627 Awadh Puri Colony Phase -III, Beniganj, Opp. Industrial Training Institute (I.T.I.), Ayodhya Main Road, Faizabad-224 001, \\Uttar Pradesh, India } 

\begin{abstract}
In the present paper, we construct and investigate a variant of modified $(p,q)$-Baskakov operators, which reproduce the
test function $x^2$. The order of approximation of the operators via K-functional and second order, usual modulus of continuity, weighted approximation properties are disscussed. In the end some graphical results, comparison with $(p,q)$-Baskakov operators is explained.
\end{abstract}
\begin{keyword}
$(p,q)$-integers; $(p,q)$-Baskakov operators; weighted approximation.\\
\textit{$2000$ Mathematics Subject Classification:} Primary $41A25$, $41A35$, $41A36$.
\end{keyword}
\end{frontmatter}
\section{Introduction}
The theory of approximation is a very extensive field
and the study of the approximation via $q-$calculus and $(p,q)-$calculus is 
of great mathematical interest and of great practical importance.
Positive approximation processes play an important role in Approximation
Theory and appear in a very natural way dealing with
approximation of continuous functions, especially one, which requires
further qualitative properties such as monotonicity, convexity
and shape preservation and so on. In recent years lot of  research is being done on the positive linear operators based on 
$q$- calculus (See \cite{lnm1},\cite{HM1},\cite{HM2},\cite{HM3}) and $(p,q)-$calculus (See \cite{acar2},\cite{ acar3}, \cite{bernpq},\cite{bbhpq}).\\ 
Basics of $(p,q)-$calculus is:\\
Assume $0<q<p\leq 1$, for any non-negative integer $n$,the $(p,q)-$integer $[n]_{p,q}$, $(p,q)-$factorial $[n]_{p,q}!$, $(p,q)-$ binomial coefficient, $(p,q)-$ power expansion, $(p,q)-$derivative is defined as
$$[n]_{p,q} := \dfrac{p^n-q^n}{p-q},$$
$$[n]_{p,q}!:=[n]_{p,q} [n-1]_{p,q} [n-2]_{p,q} \ldots 1,$$
$$\left[\begin{array}{c} n \\ k \end{array} \right]_{p,q} := \dfrac{[n]_{p,q}!}{[k]_{p,q}![n-k]_{p,q}!},$$
$$(ax+by)_{p,q}^n:=\sum_{k=0}^n\left[\begin{array}{c} n \\ k \end{array} \right]_{p,q}a^{n-k}b^k x^{n-k}y^k,$$
$$(x+y)_{p,q}^n := (x+y) (px+qy) (p^2x+q^2y)\ldots (p^{n-1}x+q^{n-1}y)=\prod_{j=0}^{n-1}(p^jx+q^jy)$$
$$(D_{p,q}f)(x):=\frac{f(px)-f(qx)}{(p-q)x}, x \neq 0, ~~ (D_{p,q}f)(0):= f^{'}(0)$$ provided that $f$ is differentiable at $0$.\\
$(p,q)-$ derivative of product of two functions $u(x)$ and $v(x)$ is
$$D_{p,q}(u(x)v(x)):=D_{p,q}(u(x))v(qx)+u(qx)D_{p,q}(v(x)).$$
For more details about $(p,q)-$ calculus readers may refer (\cite{basicpq1}-\cite{basicpq3},\cite{basicpq4}).
\section{Construction}
Baskakov operators and their $q$-analogue as in (\cite{aral1},\cite{vnm1}) has been studied in order to approximate functions over unbounded intervals.
The $(p,q)$-analogue of Baskakov operators for $x \in [0,\infty)$ and $0 < q < p \leq 1$ is defined as in (\cite{acar1}-\cite{aral2})
\begin{equation}\label{operator1}
B_{n,p,q}(f;x)=\sum_{k=0}^n \left[\begin{array}{c} n+k-1 \\ k \end{array} \right]_{p,q} p^{k+n(n-1)/2}q^{k(k-1)/2}\frac{x^k}{(1\oplus x)^{n+k}_{p,q}} f\left(\frac{p^{n-1}[k]_{p,q}}{q^{k-1}[n]_{p,q}}\right)
\end{equation}
and its moments are
\begin{lemma}\label{lem1}
 For $0<q<p\leq 1$ and $n\in \mathbb{N}$. We have
\begin{equation*}
B_{n,p,q}(1;x)=1,~B_{n,p,q}(t;x)=x,~B_{n,p,q}(t^2;x)=x^2+\frac{p^{n-1}x}{[n]_{p,q}}\left(1+\frac{p}{q}x\right)
\end{equation*}
\end{lemma}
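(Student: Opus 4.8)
The plan is to establish the three moment formulas in order of increasing difficulty, in each case reducing the computation to the basic normalization identity
\[
\sum_{k=0}^{\infty}\left[\begin{array}{c} n+k-1 \\ k \end{array}\right]_{p,q} p^{k+n(n-1)/2}q^{k(k-1)/2}\frac{x^k}{(1\oplus x)^{n+k}_{p,q}}=1,
\]
which is precisely $B_{n,p,q}(1;x)$ with $f\equiv 1$ and is the $(p,q)$-analogue of the negative binomial (Newton) series; I would derive it from the $(p,q)$-binomial expansion recorded in the Introduction. This immediately gives $B_{n,p,q}(1;x)=1$. The single algebraic lever used throughout is the factorial identity
\[
\left[\begin{array}{c} m+i-1 \\ i \end{array}\right]_{p,q}[i]_{p,q}=[m]_{p,q}\left[\begin{array}{c} m+i-1 \\ i-1 \end{array}\right]_{p,q},
\]
which trades a factor $[i]_{p,q}$ for the basis coefficient of the same operator with $m$ raised by one.

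For the first moment I would put $f(t)=t$, so the node value is $\tfrac{p^{n-1}[k]_{p,q}}{q^{k-1}[n]_{p,q}}$ and the $k=0$ term vanishes since $[0]_{p,q}=0$. Applying the factorial identity with $m=n$, $i=k$ converts the $[k]_{p,q}$ weight into $[n]_{p,q}$ times the basis coefficient of the operator with $n$ replaced by $n+1$, after which I shift the index $k\mapsto j=k-1$ and repackage the four factors $p^{k+n(n-1)/2}$, $q^{k(k-1)/2}$, $x^{k}$ and $(1\oplus x)^{n+k}_{p,q}$. The quadratic $q$-exponent collapses since $(j+1)j/2-j=j(j-1)/2$, and the residual factor $p^{n}$ is exactly the discrepancy between $p^{n(n-1)/2}$ and the $p^{(n+1)n/2}$ demanded by the normalization identity at index $n+1$; what survives is $x$ times that identity, giving $B_{n,p,q}(t;x)=x$.

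For the second moment I would take $f(t)=t^2$, so the node weight is $\tfrac{p^{2(n-1)}[k]_{p,q}^2}{q^{2(k-1)}[n]_{p,q}^2}$, and split $[k]_{p,q}^2$ via the recurrence $[k]_{p,q}=p[k-1]_{p,q}+q^{k-1}$ into $p[k]_{p,q}[k-1]_{p,q}+q^{k-1}[k]_{p,q}$. The second piece carries weight $\tfrac{[k]_{p,q}}{q^{k-1}}$, so it is just $\tfrac{p^{n-1}}{[n]_{p,q}}$ times the sum already evaluated in the first-moment step and contributes $\tfrac{p^{n-1}x}{[n]_{p,q}}$, i.e.\ the $1$ inside the bracket. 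The first piece requires applying the factorial identity twice, reducing $[k]_{p,q}[k-1]_{p,q}$ to $[n]_{p,q}[n+1]_{p,q}$ times the basis of the operator with $n$ replaced by $n+2$; after the double index shift $k\mapsto k-2$ and the analogous exponent repackaging this yields the $x^2$ term together with the leftover $\tfrac{p^{n}}{q}\cdot\tfrac{x^2}{[n]_{p,q}}$. Adding the two contributions produces $x^2+\frac{p^{n-1}x}{[n]_{p,q}}\left(1+\frac{p}{q}x\right)$.

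The main obstacle is entirely the exponent bookkeeping. Each index shift simultaneously alters the separate $p$- and $q$-power prefactors, the power $x^k$, and the \emph{length} $n+k$ of the $(p,q)$-power $(1\oplus x)^{n+k}_{p,q}$, and all of these must align exactly with the normalization identity taken at the shifted index $n+1$ or $n+2$. Verifying that the quadratic exponents $k(k-1)/2$ reduce to $j(j-1)/2$ under the shift, and that the residual powers of $p$ and $q$ combine into precisely the coefficients $1$ and $p/q$, is where the care is concentrated; once the shifts are matched, the reduction to the zeroth moment is routine.
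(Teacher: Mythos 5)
Your computation is correct, but note that the paper itself offers no proof of Lemma~\ref{lem1}: the moment formulas are imported verbatim from the cited sources on $(p,q)$-Baskakov operators, so there is no ``paper's route'' to compare against. Checking your steps: the coefficient identity $\left[\begin{smallmatrix} m+i-1 \\ i \end{smallmatrix}\right]_{p,q}[i]_{p,q}=[m]_{p,q}\left[\begin{smallmatrix} m+i-1 \\ i-1 \end{smallmatrix}\right]_{p,q}$ is valid, the exponent bookkeeping in the shift $k\mapsto k-1$ does produce $p^{j+(n+1)n/2}q^{j(j-1)/2}$ exactly as required by the normalization at index $n+1$, and in the second moment the split $[k]_{p,q}^2=p[k]_{p,q}[k-1]_{p,q}+q^{k-1}[k]_{p,q}$ cleanly separates the two contributions; after the double shift the first piece comes out as $\frac{[n+1]_{p,q}}{q\,[n]_{p,q}}x^2$, and you should state explicitly that the final step uses $[n+1]_{p,q}=p^n+q[n]_{p,q}$ to split this into $x^2+\frac{p^n x^2}{q[n]_{p,q}}$, which combined with $\frac{p^{n-1}x}{[n]_{p,q}}$ gives the claimed formula. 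Two points deserve tightening. First, the normalization identity $\sum_{k\ge 0}b_{n,k}(p,q;x)=1$ is the real content of $B_{n,p,q}(1;x)=1$ and cannot be read off directly from the finite $(p,q)$-binomial expansion $(ax+by)^n_{p,q}$ quoted in the introduction; it is the $(p,q)$-analogue of the negative binomial series over an infinite index set and needs either a citation or its own inductive argument (e.g.\ via the recurrence for the basis functions). Second, all three identities require the sum to run over $k=0,1,2,\dots$ (the upper limit $n$ in \ref{operator1} is a typo); with a finite sum none of the index shifts close up and the formulas would fail.
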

Motivated by it, we propose the King type modification of $(p,q)$-analogue of Baskakov operators which preserves $x^2$ as
\begin{equation}\label{operator2}
B_{n,p,q}^*(f;x)=\sum_{k=0}^n \left[\begin{array}{c} n+k-1 \\ k \end{array} \right]_{p,q} p^{k+n(n-1)/2}q^{k(k-1)/2}\frac{r_n^k(x)}{(1\oplus r_n(x))^{n+k}_{p,q}} f\left(\frac{p^{n-1}[k]_{p,q}}{q^{k-1}[n]_{p,q}}\right)
\end{equation}
where
\begin{equation*}
r_n(x)=\frac{-p^{n-1}+\sqrt{p^{2n-2}+4[n]_{p,q}([n]_{p,q}+p^n/q)x^2}}{2([n]_{p,q}+p^n/q)},\qquad r_n(x)\geq 0, ~ x\in [0,\infty)
\end{equation*}
\begin{lemma}\label{lem2}
For $0<q<p\leq 1$ and $n\in \mathbb{N}$. We have
\begin{equation}\label{1EqLem2}
B_{n,p,q}^*(1;x)=1,B_{n,p,q}^*(t;x)=r_n(x), B_{n,p,q}^*(t^2;x)= x^2.
\end{equation}
\end{lemma}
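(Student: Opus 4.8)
The plan is to observe that the King-type operator \eqref{operator2} is nothing but the original operator \eqref{operator1} with the argument $x$ replaced throughout by $r_n(x)$; that is, comparing the two definitions term by term, every $x$ appearing in the summand of $B_{n,p,q}$ is replaced by $r_n(x)$, so that
\begin{equation*}
B_{n,p,q}^*(f;x)=B_{n,p,q}\bigl(f;r_n(x)\bigr).
\end{equation*}
With this identification in hand, the first two claims follow immediately from Lemma~\ref{lem1}: substituting $r_n(x)$ for the independent variable gives $B_{n,p,q}^*(1;x)=B_{n,p,q}(1;r_n(x))=1$ and $B_{n,p,q}^*(t;x)=B_{n,p,q}(t;r_n(x))=r_n(x)$.

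For the third identity I would again invoke Lemma~\ref{lem1} to write
\begin{equation*}
B_{n,p,q}^*(t^2;x)=r_n^2(x)+\frac{p^{n-1}r_n(x)}{[n]_{p,q}}\left(1+\frac{p}{q}r_n(x)\right),
\end{equation*}
and the remaining task is the purely algebraic verification that this expression collapses to $x^2$. Abbreviating $r=r_n(x)$ and multiplying through by $[n]_{p,q}$, the claim $B_{n,p,q}^*(t^2;x)=x^2$ is equivalent to
\begin{equation*}
\left([n]_{p,q}+\frac{p^n}{q}\right)r^2+p^{n-1}r-[n]_{p,q}x^2=0.
\end{equation*}
I would then show that $r_n(x)$ is precisely the nonnegative root of this quadratic in $r$: solving it by the quadratic formula and selecting the root that is $\geq 0$ reproduces exactly the closed-form definition of $r_n(x)$ given before the statement. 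This step is really just a reversal of the construction, since $r_n(x)$ was engineered to satisfy this very equation.

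The only genuine obstacle is bookkeeping in that quadratic verification: one must confirm that the discriminant $p^{2n-2}+4[n]_{p,q}([n]_{p,q}+p^n/q)x^2$ and the denominator $2([n]_{p,q}+p^n/q)$ match the stated $r_n(x)$, and that the $+$ sign before the radical is forced by the requirement $r_n(x)\geq 0$ for $x\in[0,\infty)$ (the other root being negative). No further estimates are needed; once the quadratic identity is checked, substituting back yields $B_{n,p,q}^*(t^2;x)=x^2$, completing the proof of \eqref{1EqLem2}.
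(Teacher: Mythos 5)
Your proposal is correct, and it supplies exactly the argument the paper leaves implicit: Lemma~\ref{lem2} is stated without proof, since $r_n(x)$ was constructed precisely as the nonnegative root of the quadratic $\left([n]_{p,q}+\tfrac{p^n}{q}\right)r^2+p^{n-1}r-[n]_{p,q}x^2=0$ obtained by setting $B_{n,p,q}(t^2;r)=x^2$ via Lemma~\ref{lem1}. Your identification $B_{n,p,q}^*(f;x)=B_{n,p,q}(f;r_n(x))$ and the check that the stated discriminant, denominator, and choice of the $+$ sign (forced by $r_n(x)\geq 0$) match the paper's formula are all accurate, so nothing is missing.
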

\begin{lemma}\label{lem3}
For $0<q<p\leq 1$ and $n\in \mathbb{N}$. We have
\begin{equation}\label{1EqLem3}
B_{n,p,q}^*(t-x;x)=r_n(x)-x,
\end{equation}
\begin{equation}\label{2EqLem3}
B_{n,p,q}^*\left((t-x)^2;x\right)=2x^2-2xr_n(x),
\end{equation}
and $$B_{n,p,q}^*(t-x;x) \leq \left(\frac{\sqrt{\frac{p^n}{q}}-\frac{p^n}{q\sqrt{[n]_{p,q}}}}{\sqrt{[n]_{p,q}}+\frac{p^n}{q\sqrt{[n]_{p,q}}}}\right)x ,$$
\begin{equation}\label{3EqLem3}
B_{n,p,q}^*\left((t-x)^2;x\right)\leq \frac{2\left(\sqrt{\frac{p^n}{q}}-\frac{p^n}{q\sqrt{[n]_{p,q}}}\right)x^2}{\sqrt{[n]_{p,q}}+\frac{p^n}{q\sqrt{[n]_{p,q}}}}+\frac{p^{n-1}x}{\frac{p^n}{q}+[n]_{p,q}}.
\end{equation}
\end{lemma}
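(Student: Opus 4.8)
The plan is to deduce the two identities from linearity and Lemma~\ref{lem2}, and then to reduce both inequalities to two-sided pointwise bounds on $r_n(x)$.

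First, since $B_{n,p,q}^*$ is linear, I would write $t-x=t-x\cdot 1$ and $(t-x)^2=t^2-2x\,t+x^2\cdot 1$, apply $B_{n,p,q}^*$ term by term, and substitute the moment values $B_{n,p,q}^*(1;x)=1$, $B_{n,p,q}^*(t;x)=r_n(x)$, $B_{n,p,q}^*(t^2;x)=x^2$ from Lemma~\ref{lem2}. This gives $B_{n,p,q}^*(t-x;x)=r_n(x)-x$ at once, and $B_{n,p,q}^*((t-x)^2;x)=x^2-2x\,r_n(x)+x^2=2x^2-2x\,r_n(x)$, i.e.\ \eqref{1EqLem3} and \eqref{2EqLem3}. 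No estimate is needed at this stage.

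The rest rests on bounding $r_n(x)$ from its closed form. Setting $D=[n]_{p,q}+p^n/q$, the radicand equals $(p^{n-1})^2+(2\sqrt{[n]_{p,q}D}\,x)^2$, so the elementary inequality $\sqrt{\alpha^2+\beta^2}\le\alpha+\beta$ yields the upper bound $r_n(x)\le\sqrt{[n]_{p,q}/D}\,x$, while discarding the nonnegative summand $(p^{n-1})^2$ under the root gives the lower bound $r_n(x)\ge\sqrt{[n]_{p,q}/D}\,x-\dfrac{p^{n-1}}{2D}$. Writing $a=\sqrt{[n]_{p,q}}$ and $b=\sqrt{p^n/q}$ (so $D=a^2+b^2$) keeps the ensuing algebra transparent.

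For the estimate on $B_{n,p,q}^*(t-x;x)=r_n(x)-x$, subtracting $x$ from the upper bound gives $r_n(x)-x\le(a/\sqrt{a^2+b^2}-1)x$; a short computation shows this coefficient equals the stated one, $b(a-b)/(a^2+b^2)$, after one clears $\sqrt{a^2+b^2}$, and the required inequality collapses to exactly $\sqrt{a^2+b^2}\le a+b$. For \eqref{3EqLem3} I would feed the lower bound into $2x^2-2x\,r_n(x)$: the contribution $-2x\cdot(-p^{n-1}/(2D))$ produces precisely the second summand $p^{n-1}x/([n]_{p,q}+p^n/q)$, while the remaining part is $2\bigl(1-\sqrt{[n]_{p,q}/D}\bigr)x^2$. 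The main obstacle is this last step, namely dominating the coefficient $2\bigl(1-\sqrt{[n]_{p,q}/D}\bigr)$ by $2\,b(a-b)/(a^2+b^2)$; this is a pure surd inequality in $a,b$ (equivalently in $[n]_{p,q}$ and $p^n/q$), and it is here that the subadditivity $\sqrt{[n]_{p,q}+p^n/q}\le\sqrt{[n]_{p,q}}+\sqrt{p^n/q}$ and the relative size of $[n]_{p,q}$ and $p^n/q$ must be handled with care. I would therefore isolate this as the single nontrivial inequality and verify it separately before assembling the final estimate.
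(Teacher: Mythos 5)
Your handling of \eqref{1EqLem3}, \eqref{2EqLem3} and of the first inequality is correct and complete. In particular, with $a=\sqrt{[n]_{p,q}}$, $b=\sqrt{p^n/q}$ and $D=a^2+b^2$, your reduction of $r_n(x)-x\le\frac{b(a-b)}{a^2+b^2}\,x$ to the subadditivity $\sqrt{a^2+b^2}\le a+b$, via the upper bound $r_n(x)\le \frac{a}{\sqrt{a^2+b^2}}\,x$, is a genuine proof; the paper gives none, since its argument for both estimates consists of writing out the closed form of $r_n(x)$ and then asserting the bounds with no intermediate step. Your two-sided bounds on $r_n(x)$ are exactly right, and the term $-2x\cdot\bigl(-p^{n-1}/(2D)\bigr)$ does produce the second summand $p^{n-1}x/([n]_{p,q}+p^n/q)$ of \eqref{3EqLem3}.

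The gap is precisely the step you flagged, and it cannot be closed: the surd inequality $1-\frac{a}{\sqrt{a^2+b^2}}\le\frac{b(a-b)}{a^2+b^2}$ is false unless $b/a$ is sufficiently small. Already at $a=b$ the left side is $1-1/\sqrt{2}>0$ while the right side is $0$. Worse, because your lower bound on $r_n(x)$ is asymptotically sharp, $B_{n,p,q}^*((t-x)^2;x)=2x(x-r_n(x))\sim 2\bigl(1-\frac{a}{\sqrt{a^2+b^2}}\bigr)x^2$ as $x\to\infty$, whereas the right-hand side of \eqref{3EqLem3} grows like $\frac{2b(a-b)}{a^2+b^2}\,x^2$; hence the claimed estimate itself fails in the stated generality, not merely your route to it. Concretely, for $n=1$, $p=1$, $q=1/2$, $x=1$ one has $[1]_{p,q}=1$ and $p^n/q=2$, the left side of \eqref{3EqLem3} equals $2+\frac{1}{3}-\frac{\sqrt{13}}{3}\approx 1.13$, while the right side equals $\frac{2(\sqrt{2}-2)}{3}+\frac{1}{3}\approx -0.06$; indeed the leading coefficient is negative whenever $[n]_{p,q}<p^n/q$, although the left side is always nonnegative. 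So no amount of care with the relative sizes of $[n]_{p,q}$ and $p^n/q$ will verify the isolated inequality for all admissible parameters: \eqref{3EqLem3} requires an additional hypothesis forcing $p^n/q$ to be small relative to $[n]_{p,q}$, which holds in the asymptotic regime $p_n,q_n\to 1$ used in the later theorems but not for all $0<q<p\le 1$ and $n\in\mathbb{N}$.
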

\begin{proof}
The first two equalities are clear from Lemma 2. For the inequalities, using \ref{operator2},\ref{1EqLem3} and \ref{2EqLem3}. We can write
\begin{eqnarray*}
|B_{n,p,q}^*(t-x;x)|&=&\left|\frac{-p^{n-1}+\sqrt{p^{2n-2}+4[n]_{p,q}([n]_{p,q}+p^n/q)x^2}}{2([n]_{p,q}+p^n/q)}-x\right|
\\&\leq & \left(\frac{\sqrt{\frac{p^n}{q}}-\frac{p^n}{q\sqrt{[n]_{p,q}}}}{\sqrt{[n]_{p,q}}+\frac{p^n}{q\sqrt{[n]_{p,q}}}}\right)x ,
\end{eqnarray*}
and
\begin{eqnarray*}
B_{n,p,q}^*\left((t-x)^2;x\right)&=&2x^2-x\left(\frac{-p^{n-1}+\sqrt{p^{2n-2}+4[n]_{p,q}([n]_{p,q}+p^n/q)x^2}}{([n]_{p,q}+p^n/q)}\right)
\\&=& \frac{2x^2([n]_{p,q}+p^n/q)+xp^{n-1}-x\sqrt{p^{2n-2}+4[n]_{p,q}([n]_{p,q}+p^n/q)x^2}}{([n]_{p,q}+p^n/q)}
\\
&\leq & \frac{2\left(\sqrt{\frac{p^n}{q}}-\frac{p^n}{q\sqrt{[n]_{p,q}}}\right)x^2}{\sqrt{[n]_{p,q}}+\frac{p^n}{q\sqrt{[n]_{p,q}}}}+\frac{p^{n-1}x}{\frac{p^n}{q}+[n]_{p,q}}.
\end{eqnarray*}
\end{proof}

\section{Approximation Properties of $B_{n,p,q}^*f$}
Now we will study the weighted approximation theorem for the operator \ref{operator2}. Considering the following defintions
\begin{enumerate}
\item[$\cdot$] $C_B[0,\infty)$ is the space of real-valued uniformly continuous and bounded
functions $f$ defined on the interval $[0,\infty)$.
\item[$\cdot$]Norm $\|\cdot\|$ on the space $C_B[0,\infty)$ is given by $$\|f\|=\sup_{o\leq x <\infty}|f(x)|.$$
\item[$\cdot$] $\mathcal{K}-$ functional is
$$\mathcal{K}_2(f,\delta)=\inf_{g\ \mathcal{W}^2}\{\|f-g\|+\delta\|g^{''}\|\},$$
where $\delta >0$ and $\mathcal{W}^2=\{g\in C_B[0,\infty): g,g^{''}\in C_B[0,\infty) \}$. By [\cite{devore}, p-177, Theorem 2.4] we have an absolute constant $\mathcal{C}>0$ such that 
\begin{equation}\label{Ineq1}
\mathcal{K}_2(f,\delta)\leq \mathcal{C} \omega_2(f,\sqrt{\delta}).
\end{equation}
\item[$\cdot$] The second order modulus of smoothness of $f\in C_B[0,\infty)$ is 
$$\omega_2(f,\delta)=\sup_{0\leq h \leq \sqrt{\delta}} \sup_{0\leq x < \infty}|f(x+2h)-2f(x+h)+f(x)|,$$
and the usual modulus of continuity is given by
$$\omega (f,\delta)=\sup_{0\leq h \leq \delta} \sup_{0\leq x < \infty}|f(x+h)-f(x)|.$$
\item[$\cdot$] $B_m[0,\infty)$ is the space of all functions satisfying $|f(x)|\leq M_f (1+x^m), x\in [0,\infty), m>0, M_f$ is a constant depending on $f$, and $C_m[0,\infty)=B_m[0,\infty) \cap C[0,\infty)$.
\item[$\cdot$] The norm for $f \in C_m[0,\infty)$ is
$$\|f\|_m=\sup_{x\geq 0}\frac{|f(x)|}{1+x^m}.$$
\item[ $\cdot$] $$C_m^*[0,\infty)=\left\{ f\in C_m[0,\infty): \frac{|f(x)|}{1+x^m} < \infty \right\}.$$
\end{enumerate}
\begin{theorem}
Let $q=q_n\in (0,1), p=p_n \in (0,1]$ such that $q_n \to 1, ~p_n\to 1$ as $n\to \infty $. Then for each function $f\in C_2^*[0,\infty)$ we get $$\lim_{n\to \infty}\|B_{n,p_n,q_n}^*f-f\|_2=0.$$
\end{theorem}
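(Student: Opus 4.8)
The plan is to deduce the statement from the weighted Korovkin theorem of Gadzhiev, which says that for a sequence of positive linear operators $L_n$ carrying $C_2^*[0,\infty)$ into $B_2[0,\infty)$, the condition $\lim_{n\to\infty}\|L_n(t^\nu;\cdot)-x^\nu\|_2=0$ for $\nu=0,1,2$ already forces $\lim_{n\to\infty}\|L_nf-f\|_2=0$ for every $f\in C_2^*[0,\infty)$. First I would note that $B_{n,p,q}^*$ really does act between these spaces: since it is positive and linear with $B_{n,p,q}^*(1;x)=1$ and $B_{n,p,q}^*(t^2;x)=x^2$ (Lemma 2), any $f$ with $|f(t)|\le M_f(1+t^2)$ satisfies $|B_{n,p,q}^*(f;x)|\le M_f(1+x^2)$, so $\|B_{n,p,q}^*f\|_2\le M_f$. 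Thus the whole problem reduces to verifying the three test-function limits in the weighted norm.

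The cases $\nu=0$ and $\nu=2$ are immediate and cost nothing: by Lemma 2 we have $B_{n,p,q}^*(1;x)=1$ and $B_{n,p,q}^*(t^2;x)=x^2$ identically, so both weighted norms vanish \emph{exactly} for every $n$ — this is precisely the feature built into the King-type modification. The only real work is the case $\nu=1$, where $B_{n,p,q}^*(t;x)=r_n(x)$ and I must control $\sup_{x\ge 0}\frac{|r_n(x)-x|}{1+x^2}$. For this I would use the estimate established in (the proof of) Lemma 3, namely $|r_n(x)-x|\le A_n\,x$ with
$$A_n=\frac{\sqrt{p^n/q}-\frac{p^n}{q\sqrt{[n]_{p,q}}}}{\sqrt{[n]_{p,q}}+\frac{p^n}{q\sqrt{[n]_{p,q}}}},$$
combined with the elementary bound $\frac{x}{1+x^2}\le\frac12$, which gives
$$\|B_{n,p_n,q_n}^*(t;\cdot)-x\|_2\le\frac{A_n}{2}.$$

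It then remains to show $A_n\to 0$. Multiplying numerator and denominator by $\sqrt{[n]_{p,q}}$ rewrites this coefficient as
$$A_n=\frac{\sqrt{[n]_{p,q}}\,\sqrt{p^n/q}-p^n/q}{[n]_{p,q}+p^n/q},$$
whose numerator grows like $\sqrt{[n]_{p,q}}$ while the denominator grows like $[n]_{p,q}$, so $A_n=O\!\left(1/\sqrt{[n]_{p,q}}\right)$ and hence $A_n\to 0$ as soon as $[n]_{p_n,q_n}\to\infty$ and $p_n^n/q_n$ stays bounded. I expect the main obstacle to be exactly this asymptotic fact: that $[n]_{p_n,q_n}\to\infty$ does \textbf{not} follow from $p_n,q_n\to 1$ by themselves (the rate of approach matters, since e.g. $p_n=1-c/\sqrt{n}$ would make $[n]_{p,q}\to 0$). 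The clean way around this is the standard accompanying hypothesis used throughout the $(p,q)$-literature, that $p_n^n$ and $q_n^n$ tend to finite positive limits; this simultaneously yields $[n]_{p_n,q_n}\to\infty$ and the boundedness of $p_n^n/q_n$. With these two asymptotics in place, all three test-function conditions hold, and Gadzhiev's theorem gives the desired conclusion $\lim_{n\to\infty}\|B_{n,p_n,q_n}^*f-f\|_2=0$.
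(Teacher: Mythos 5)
Your proposal follows essentially the same route as the paper: reduce to the weighted Korovkin theorem on $C_2^*[0,\infty)$, observe that the test functions $e_0$ and $e_2$ are reproduced exactly by Lemma \ref{lem2}, and control $e_1$ via the bound $|r_n(x)-x|\le A_n x$ from Lemma \ref{lem3} together with $\sup_{x\ge0}x/(1+x^2)<\infty$. Your closing remark is well taken and goes beyond the paper: the paper simply asserts that $A_n\to0$ follows from $p_n,q_n\to1$, whereas, as you correctly observe, this requires $[n]_{p_n,q_n}\to\infty$ (with $p_n^n/q_n$ bounded), which does not follow from the stated hypotheses alone (e.g. $q_n=1-1/\sqrt{n}$ can make $[n]_{p_n,q_n}$ tend to $0$), so the standard supplementary assumption you invoke --- that $p_n^n$ and $q_n^n$ tend to finite positive limits --- is genuinely needed and is missing from the theorem as stated.
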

\begin{proof}
As per weighted Korovkin theorem, it is sufficient to verify the following 3 conditions:
\begin{equation}\label{1EqThm1}
\lim_{n\to \infty}\|B_{n,p_n,q_n}^*e_i-e_i\|_2=0,\qquad i=0,1,2.
\end{equation}
From \ref{1EqLem2}, \ref{1EqThm1} hold for $i=0.$  For $i=1,2$, using \ref{1EqLem3}-\ref{3EqLem3} we have, as $n\to \infty$
\begin{eqnarray*}
\|B_{n,p_n,q_n}^*e_1-e_1\|_2 &=& \sup_{x\geq 0}\frac{r_n(x)-x}{1+x^2}\\
&\leq & \sup_{x\geq 0} \frac{x}{1+x^2} \left(\frac{\sqrt{\frac{p_n^n}{q_n}}-\frac{p_n^n}{q_n\sqrt{[n]_{p_n,q_n}}}}{\sqrt{[n]_{p_n,q_n}}+\frac{p_n^n}{q_n\sqrt{[n]_{p_n,q_n}}}}\right)\\
&\leq &  \left(\frac{\sqrt{\frac{p_n^n}{q_n}}-\frac{p_n^n}{q_n\sqrt{[n]_{p_n,q_n}}}}{\sqrt{[n]_{p_n,q_n}}+\frac{p_n^n}{q_n\sqrt{[n]_{p_n,q_n}}}}\right) \longrightarrow 0 
\end{eqnarray*}
and
\begin{eqnarray*}
\|B_{n,p_n,q_n}^*e_2-e_2\|_2 &\leq & \sup_{x\geq 0}\frac{\frac{2\left(\sqrt{\frac{p_n^n}{q_n}}-\frac{p_n^n}{q_n\sqrt{[n]_{p_n,q_n}}}\right)x^2}{\sqrt{[n]_{p_n,q_n}}+\frac{p_n^n}{q_n\sqrt{[n]_{p_n,q_n}}}}+\frac{p_n^{n-1}x}{\frac{p_n^n}{q_n}+[n]_{p_n,q_n}}}{1+x^2}\\
&\leq & \frac{2\left(\sqrt{\frac{p_n^n}{q_n}}-\frac{p_n^n}{q_n\sqrt{[n]_{p_n,q_n}}}\right)}{\sqrt{[n]_{p_n,q_n}}+\frac{p_n^n}{q_n\sqrt{[n]_{p_n,q_n}}}}+\frac{p_n^{n-1}}{\frac{p_n^n}{q_n}+[n]_{p_n,q_n}} \longrightarrow 0
\end{eqnarray*}
\end{proof}
\begin{theorem}
Let $p,q \in (0,1)$ such that $0<q<p \leq 1.$ Then we have 
$$|B_{n,p,q}^*(f;x)|\leq M \omega_2(f, \sqrt{\delta_n(x)})+\omega \left(f,\frac{\sqrt{\frac{p^n}{q}}-\frac{p^n}{q\sqrt{[n]_{p,q}}}}{\sqrt{[n]_{p,q}}+\frac{p^n}{q\sqrt{[n]_{p,q}}}}x\right),$$
where $x\in [0,\infty)$, $f\in C_B[0,\infty)$ and $\delta_n(x)=\frac{3\left(\sqrt{\frac{p^n}{q}}-\frac{p^n}{q\sqrt{[n]_{p,q}}}\right)}{\sqrt{[n]_{p,q}}+\frac{p^n}{q\sqrt{[n]_{p,q}}}}+\frac{p^{n-1}x}{\frac{p^n}{q}+[n]_{p,q}}.$
\end{theorem}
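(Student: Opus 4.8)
The plan is to prove the standard Peetre $\mathcal{K}$-functional estimate, so I read the left-hand side as the approximation error $|B_{n,p,q}^*(f;x)-f(x)|$. The obstruction to a direct Taylor argument is that $B_{n,p,q}^*$ does not reproduce linear functions: by Lemma \ref{lem2} one has $B_{n,p,q}^*(t;x)=r_n(x)\neq x$. To repair this I would introduce the auxiliary operator
$$\widehat{B}_{n,p,q}^*(f;x):=B_{n,p,q}^*(f;x)+f(x)-f(r_n(x)),$$
which keeps $\widehat{B}_{n,p,q}^*(1;x)=1$ while now reproducing linear functions, since $\widehat{B}_{n,p,q}^*(t;x)=B_{n,p,q}^*(t;x)+x-r_n(x)=x$ and hence $\widehat{B}_{n,p,q}^*(t-x;x)=0$.

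Next, for $g\in\mathcal{W}^2$ I would insert the Taylor formula $g(t)=g(x)+(t-x)g'(x)+\int_x^t(t-u)g''(u)\,du$ into $\widehat{B}_{n,p,q}^*$. Because $\widehat{B}_{n,p,q}^*$ reproduces constants and linear functions, the value and first-derivative terms cancel, leaving only the two integral remainders (one from $B_{n,p,q}^*$ and one from the correction $f(x)-f(r_n(x))$). Bounding each remainder by $\left|\int_x^t(t-u)g''(u)\,du\right|\le\tfrac12\|g''\|(t-x)^2$ gives
$$\left|\widehat{B}_{n,p,q}^*(g;x)-g(x)\right|\le\frac{\|g''\|}{2}\Big(B_{n,p,q}^*\big((t-x)^2;x\big)+(r_n(x)-x)^2\Big).$$

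The main computational step is to bound this bracket by $\delta_n(x)$. Here I would use \eqref{3EqLem3} for $B_{n,p,q}^*((t-x)^2;x)$ and the first inequality of Lemma \ref{lem3} for $|r_n(x)-x|$, noting that the coefficient $\big(\sqrt{p^n/q}-p^n/(q\sqrt{[n]_{p,q}})\big)\big/\big(\sqrt{[n]_{p,q}}+p^n/(q\sqrt{[n]_{p,q}})\big)$ does not exceed $1$, so that squaring it only decreases it; the two $x^2$-contributions then merge and produce the factor $3$ appearing in $\delta_n(x)$. This coefficient bookkeeping is the step I expect to carry essentially all of the algebra.

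Finally I would return from $g$ to $f$. Since $B_{n,p,q}^*$ is positive with $B_{n,p,q}^*(1;x)=1$ it is a contraction, so together with the two point evaluations in the correction term $|\widehat{B}_{n,p,q}^*(h;x)|\le 3\|h\|$; the triangle inequality then yields
$$\left|\widehat{B}_{n,p,q}^*(f;x)-f(x)\right|\le 4\|f-g\|+\tfrac12\|g''\|\,\delta_n(x).$$
Taking the infimum over $g\in\mathcal{W}^2$ gives a constant multiple of $\mathcal{K}_2\big(f,\delta_n(x)/8\big)$, which by \eqref{Ineq1} and the monotonicity of $\omega_2$ is absorbed into $M\,\omega_2(f,\sqrt{\delta_n(x)})$. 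Writing $B_{n,p,q}^*(f;x)-f(x)=\big(\widehat{B}_{n,p,q}^*(f;x)-f(x)\big)+\big(f(r_n(x))-f(x)\big)$ and estimating the last difference by $\omega(f,|r_n(x)-x|)$ via the first inequality of Lemma \ref{lem3} supplies the remaining modulus-of-continuity term and completes the bound.
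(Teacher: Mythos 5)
Your proposal follows essentially the same route as the paper's own proof: the same auxiliary operator $\hat{B}_{n,p,q}^*(f;x)=B_{n,p,q}^*(f;x)+f(x)-f(r_n(x))$ restoring linear reproduction, the same Taylor-remainder estimate via Lemma \ref{lem3} yielding the factor $3$ (the paper also silently uses that the squared coefficient is at most the coefficient itself), the same $3\|f\|$ boundedness and $4\|f-g\|$ split, and the same passage to $\mathcal{K}_2$ and then $\omega_2$ via \ref{Ineq1}, with the extra $\omega(f,|r_n(x)-x|)$ term handled identically. You also correctly read the stated left-hand side as the error $|B_{n,p,q}^*(f;x)-f(x)|$, which is what the paper actually proves.
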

\begin{proof}
Consider the  auxiliary operator
\begin{equation} \label{operator3}
 \hat{B}_{n,p,q}^*(f;x)=B_{n,p,q}^*(f;x)+f(x)-f(r_n(x)).
 \end{equation}
 Hence it is clear by \ref{1EqLem2} that the operator \ref{operator3} are linear and  $$\hat{B}_{n,p,q}^*(1;x)={B}_{n,p,q}^*(1;x)=1,$$$$\hat{B}_{n,p,q}^*(t;x)=B_{n,p,q}^*(t;x)+x-r_n(x)=x.$$
 Let $g\in \mathcal{W}^2$, By classical Taylor's expansion of $g\in \mathcal{W}^2$ gives for $t \in \mathbb{R}^{+}$ that
 $$g(t)=g(x)+(t-x) g^{'}(x)+\int_x^t (t-u)g{''}(u) du.$$
 Applying with the operator \ref{operator3} on both sides  we get,
 $$\hat{B}_{n,p,q}^*(g(t);x)-g(x)=\hat{B}_{n,p,q}^*\left(\int_x^t (t-u)g{''}(u) du;x\right).$$
 By definition of \ref{operator3} we have
 \begin{eqnarray*}
 |\hat{B}_{n,p,q}^*(g;x)-g(x)|&\leq & \left|{B}_{n,p,q}^*\left(\int_x^t (t-u)g{''}(u) du;x\right) \right|+{B}_{n,p,q}^*\left(\int_x^{r_n(x)} (r_n(x)-u)g{''}(u) du;x\right)\\
 & \leq & \|g^{''}\|\left[ {B}_{n,p,q}^*\left((t-x)^2;x\right)+(r_n(x)-x)^2\right]\\
 &\leq & \|g^{''}\| \left[\frac{2\left(\sqrt{\frac{p^n}{q}}-\frac{p^n}{q\sqrt{[n]_{p,q}}}\right)x^2}{\sqrt{[n]_{p,q}}+\frac{p^n}{q\sqrt{[n]_{p,q}}}}+\frac{p^{n-1}x}{\frac{p^n}{q}+[n]_{p,q}}+(r_n(x)-x)^2 \right]\\
 &\leq &  \|g^{''}\|\left[\frac{2\left(\sqrt{\frac{p^n}{q}}-\frac{p^n}{q\sqrt{[n]_{p,q}}}\right)x^2}{\sqrt{[n]_{p,q}}+\frac{p^n}{q\sqrt{[n]_{p,q}}}}+\frac{p^{n-1}x}{\frac{p^n}{q}+[n]_{p,q}}+\left(\frac{\sqrt{\frac{p^n}{q}}-\frac{p^n}{q\sqrt{[n]_{p,q}}}}{\sqrt{[n]_{p,q}}+\frac{p^n}{q\sqrt{[n]_{p,q}}}}\right)^2x^2 \right]\\
 &\leq &  \|g^{''}\|\left[\frac{3\left(\sqrt{\frac{p^n}{q}}-\frac{p^n}{q\sqrt{[n]_{p,q}}}\right)x^2}{\sqrt{[n]_{p,q}}+\frac{p^n}{q\sqrt{[n]_{p,q}}}}+\frac{p^{n-1}x}{\frac{p^n}{q}+[n]_{p,q}}\right]
\end{eqnarray*}
Using \ref{operator3}, \ref{operator2} and lemma \ref{lem2} we get
$$\left|\hat{B}_{n,p,q}^*(f;x)\right|\leq \left|{B}_{n,p,q}^*(f;x)\right|+2\|f\|\leq \|f\|{B}_{n,p,q}^*(1;x)+2\|f\| \leq 3 \|f\|.$$
We can obtain
\begin{eqnarray*}
\left|{B}_{n,p,q}^*(f;x)-f(x)\right| &\leq & \left|\hat{B}_{n,p,q}^*(f-g;x)-(f-g)(x)\right|+\left|\hat{B}_{n,p,q}^*(g;x)-g(x)\right|+|f(x)-f(r_n(x))|\\&\leq & 4 \|f-g\|+\|g^{''}\|\left(\frac{3\left(\sqrt{\frac{p^n}{q}}-\frac{p^n}{q\sqrt{[n]_{p,q}}}\right)x^2}{\sqrt{[n]_{p,q}}+\frac{p^n}{q\sqrt{[n]_{p,q}}}}+\frac{p^{n-1}x}{\frac{p^n}{q}+[n]_{p,q}} \right) +\omega (f,\frac{\sqrt{\frac{p^n}{q}}-\frac{p^n}{q\sqrt{[n]_{p,q}}}}{\sqrt{[n]_{p,q}}+\frac{p^n}{q\sqrt{[n]_{p,q}}}}x)
\end{eqnarray*}
 Upon taking infimum on right hand side over all $g\in \mathcal{W}^2$ we have
 $$|{B}_{n,p,q}^*(f;x)-f(x)| \leq 4\mathcal{K}_2(f,\delta)+\omega\left(f,\frac{\sqrt{\frac{p^n}{q}}-\frac{p^n}{q\sqrt{[n]_{p,q}}}}{\sqrt{[n]_{p,q}}+\frac{p^n}{q\sqrt{[n]_{p,q}}}}x\right).$$ 
 Using inequality \ref{Ineq1} we have
 $$|{B}_{n,p,q}^*(f;x)-f(x)| \leq M \omega_2(f,\sqrt{\delta)}+\omega\left(f,\frac{\sqrt{\frac{p^n}{q}}-\frac{p^n}{q\sqrt{[n]_{p,q}}}}{\sqrt{[n]_{p,q}}+\frac{p^n}{q\sqrt{[n]_{p,q}}}}x\right).$$ 
\end{proof}
\begin{theorem}
Let $q=q_n\in (0,1), p=p_n\in (0,1]$ such that $q_n\to 1,p_n\to 1$ as $n\to \infty$. If $f\in C_m^*[0,\infty)$ and let $f^*(z)=f(z^2), z\in [0,\infty).$ For all $t>0$ and $x\geq 0$, we have
 $$|{B}_{n,p_n,q_n}^*(f;x)-f(x)| \leq 2\omega \left(f^*,\sqrt{\frac{2\left(\sqrt{\frac{p^n}{q}}-\frac{p^n}{q\sqrt{[n]_{p,q}}}\right)x}{\sqrt{[n]_{p,q}}+\frac{p^n}{q\sqrt{[n]_{p,q}}}}+\frac{p^{n-1}}{\frac{p^n}{q}+[n]_{p,q}}}\right)$$
\end{theorem}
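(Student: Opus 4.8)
The plan is to exploit the fact that $B_{n,p,q}^*$ preserves $e_2=t^2$ by passing to the substituted function $f^*(z)=f(z^2)$, so that each node value $f\!\left(\tfrac{p^{n-1}[k]_{p,q}}{q^{k-1}[n]_{p,q}}\right)$ is rewritten as $f^*$ evaluated at the square root of that node. Writing $t_{n,k}=\tfrac{p^{n-1}[k]_{p,q}}{q^{k-1}[n]_{p,q}}$ and letting $a_{n,k}(x)$ denote the nonnegative coefficients in \ref{operator2}, which sum to $1$ by \ref{1EqLem2}, I would start from the identity $B_{n,p,q}^*(f;x)-f(x)=\sum_k a_{n,k}(x)\big(f^*(\sqrt{t_{n,k}})-f^*(\sqrt{x})\big)$ and estimate each difference by $\omega\big(f^*,|\sqrt{t_{n,k}}-\sqrt{x}|\big)$.

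First I would apply the standard property $\omega(f^*,\lambda)\le(1+\lambda/\delta)\,\omega(f^*,\delta)$ term by term and use $\sum_k a_{n,k}(x)=1$ to obtain
$$|B_{n,p,q}^*(f;x)-f(x)|\le\omega(f^*,\delta)\left(1+\frac{1}{\delta}\sum_k a_{n,k}(x)\,\big|\sqrt{t_{n,k}}-\sqrt{x}\big|\right).$$
Then I would control the remaining sum by the Cauchy--Schwarz inequality, once more invoking $\sum_k a_{n,k}(x)=1$,
$$\sum_k a_{n,k}(x)\,\big|\sqrt{t_{n,k}}-\sqrt{x}\big|\le\left(\sum_k a_{n,k}(x)\,\big(\sqrt{t_{n,k}}-\sqrt{x}\big)^2\right)^{1/2}.$$

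The crucial elementary step is the inequality $(\sqrt{t}-\sqrt{x})^2\le(t-x)^2/x$ for $t\ge0$, $x>0$, which follows at once from $\sqrt{t}+\sqrt{x}\ge\sqrt{x}$; this is exactly the device that converts the square-root modulus of $f^*$ back into the ordinary central second moment of the operator. Applying it inside the sum yields $\sum_k a_{n,k}(x)(\sqrt{t_{n,k}}-\sqrt{x})^2\le\tfrac{1}{x}\,B_{n,p,q}^*((t-x)^2;x)$, and then the bound \ref{3EqLem3} of Lemma \ref{lem3} gives
$$\frac{1}{x}\,B_{n,p,q}^*\big((t-x)^2;x\big)\le\frac{2\left(\sqrt{\frac{p^n}{q}}-\frac{p^n}{q\sqrt{[n]_{p,q}}}\right)x}{\sqrt{[n]_{p,q}}+\frac{p^n}{q\sqrt{[n]_{p,q}}}}+\frac{p^{n-1}}{\frac{p^n}{q}+[n]_{p,q}}.$$
Choosing $\delta$ to be the square root of this last quantity — precisely the argument of $\omega(f^*,\cdot)$ in the statement — makes the Cauchy--Schwarz sum at most $\delta$, so the bracketed factor equals $1+1=2$ and the claimed estimate $2\,\omega(f^*,\delta)$ follows. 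The boundary case $x=0$ is handled separately and trivially, since $r_n(0)=0$ forces all terms with $k\ge1$ to vanish and leaves $B_{n,p,q}^*(f;0)=f(0)$.

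I expect the main obstacle to be organizational rather than deep: one must check that the elementary inequality $(\sqrt{t}-\sqrt{x})^2\le(t-x)^2/x$ matches the $1/x$ rescaling between \ref{3EqLem3} and the modulus argument in the statement, and that the stated $\delta$ is reproduced verbatim. One should also confirm that working with the ordinary modulus $\omega(f^*,\cdot)$ is legitimate for $f\in C_m^*[0,\infty)$, i.e. that $f^*$ carries enough continuity for the modulus inequalities to be applied on the relevant range.
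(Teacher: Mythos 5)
Your proposal is correct and follows essentially the same route as the paper: rewrite the operator in terms of $f^*(\sqrt{\cdot})$, bound each term by the modulus of continuity, use the subadditivity property $\omega(f^*,\lambda)\leq(1+\lambda/\delta)\omega(f^*,\delta)$ to extract the factor $2$, and reduce the first absolute moment of $|\sqrt{\cdot}-\sqrt{x}|$ via $\sqrt{t}+\sqrt{x}\geq\sqrt{x}$ and Cauchy--Schwarz to the second central moment bound of Lemma \ref{lem3}. The only (immaterial) difference is the order in which you apply Cauchy--Schwarz and the $1/\sqrt{x}$ estimate, and your explicit treatment of $x=0$ is a small improvement over the paper, which leaves that boundary case implicit.
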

\begin{proof}
Let $f\in C_m^*[0,\infty)$ is fixed. By definition of $f^*$ we get
$${B}_{n,p_n,q_n}^*(f;x)={B}_{n,p_n,q_n}^*(f^*(\sqrt{\cdot});x).$$
Now, we can write 
\begin{eqnarray*}
|{B}_{n,p_n,q_n}^*(f;x)-f(x)|&=& |{B}_{n,p_n,q_n}^*(f^*(\sqrt{\cdot});x)-f^*(\sqrt{x})|\\
&=& \left|\sum_{k=0}^\infty\left( f^*\left(\sqrt{\frac{p_n^{n-1}[k]_{p_n,q_n}}{q_n^{k-1}[n]_{p_n,q_n}}}\right)-f^*(\sqrt{x})\right)b_{n,k}(p_n,q_n;r_n(x)) \right|\\
&\leq & \sum_{k=0}^\infty \left|f^*\left(\sqrt{\frac{p_n^{n-1}[k]_{p_n,q_n}}{q_n^{k-1}[n]_{p_n,q_n}}}\right)-f^*(\sqrt{x})\right| b_{n,k}(p_n,q_n;r_n(x))\\
&\leq & \sum_{k=0}^\infty \omega\left(f^*;\left| \left(\sqrt{\frac{p_n^{n-1}[k]_{p_n,q_n}}{q_n^{k-1}[n]_{p_n,q_n}}}\right)-\sqrt{x}\right| \right)b_{n,k}(p_n,q_n;r_n(x))
\\&=& \sum_{k=0}^\infty \omega\left(f^*;\frac{\left| \left(\sqrt{\frac{p_n^{n-1}[k]_{p_n,q_n}}{q_n^{k-1}[n]_{p_n,q_n}}}\right)-\sqrt{x}\right|}{{B}_{n,p_n,q_n}(|\sqrt{\cdot}-\sqrt{x}|;x)}{B}_{n,p_n,q_n}(|\sqrt{\cdot}-\sqrt{x}|;x) \right)b_{n,k}(p_n,q_n;r_n(x))
\end{eqnarray*}
where 
$$b_{n,k}(p_n,q_n;r_n(x))=\left[\begin{array}{c} n+k-1 \\ k \end{array} \right]_{p_n,q_n} p_n^{k+n(n-1)/2}q_n^{k(k-1)/2}\frac{r_n^k(x)}{(1\oplus r_n(x))^{n+k}_{p_n,q_n}}.$$
Using the property of modulus of continuity
$$\omega(f^*;\alpha\delta)\leq (1+\alpha) ~\omega(f^*;\delta), ~\alpha, \delta \geq 0,$$
we obtain
\begin{eqnarray*}
&& |{B}_{n,p,q}^*(f;x)-f(x)| \\&&
\leq \omega (f^*;B_{n,p_n,q_n}^*(|\sqrt{\cdot}-\sqrt{x}|;x))\sum_{k=0}^\infty \left(1+\frac{\left| \left(\sqrt{\frac{p_n^{n-1}[k]_{p_n,q_n}}{q_n^{k-1}[n]_{p_n,q_n}}}\right)-\sqrt{x}\right|}{{B}_{n,p_n,q_n}(|\sqrt{\cdot}-\sqrt{x}|;x)}\right) b_{n,k}(p_n,q_n;r_n(x))
\\&&=2\omega (f^*;B_{n,p_n,q_n}^*(|\sqrt{\cdot}-\sqrt{x}|;x))
\end{eqnarray*}
Since $\frac{1}{\left| \left(\sqrt{\frac{p_n^{n-1}[k]_{p_n,q_n}}{q_n^{k-1}[n]_{p_n,q_n}}}\right)+\sqrt{x}\right|}\leq \frac{1}{\sqrt{x}}$ and using Cauchy-Schwarz inequality, we get
\begin{eqnarray*}
{B}_{n,p_n,q_n}^*(|\sqrt{\cdot}-\sqrt{x}|;x)&=& \sum_{k=0}^\infty \left| \left(\sqrt{\frac{p_n^{n-1}[k]_{p_n,q_n}}{q_n^{k-1}[n]_{p_n,q_n}}}\right)-\sqrt{x}\right|b_{n,k}(p_n,q_n;r_n(x))
\\&=& \sum_{k=0}^\infty \frac{\left|\frac{p_n^{n-1}[k]_{p_n,q_n}}{q_n^{k-1}[n]_{p_n,q_n}}-x\right|}{\left(\sqrt{\frac{p_n^{n-1}[k]_{p_n,q_n}}{q_n^{k-1}[n]_{p_n,q_n}}}\right)+\sqrt{x}}b_{n,k}(p_n,q_n;r_n(x))\\
&\leq & \frac{1}{\sqrt{x}}\sum_{k=0}^\infty \left|\frac{p_n^{n-1}[k]_{p_n,q_n}}{q_n^{k-1}[n]_{p_n,q_n}}-x\right|b_{n,k}(p_n,q_n;r_n(x))\\
&\leq & \frac{1}{\sqrt{x}}\sqrt{\sum_{k=0}^\infty \left|\frac{p_n^{n-1}[k]_{p_n,q_n}}{q_n^{k-1}[n]_{p_n,q_n}}-x\right|^2b_{n,k}(p_n,q_n;r_n(x))}\\
&\leq & \frac{1}{\sqrt{x}}\sqrt{{B}_{n,p_n,q_n}^*((\cdot-x)^2;r_n(x))}\\
&\leq & \sqrt{\frac{2\left(\sqrt{\frac{p^n}{q}}-\frac{p^n}{q\sqrt{[n]_{p,q}}}\right)x}{\sqrt{[n]_{p,q}}+\frac{p^n}{q\sqrt{[n]_{p,q}}}}+\frac{p^{n-1}}{\frac{p^n}{q}+[n]_{p,q}}}
\end{eqnarray*}
which completes the proof.
\end{proof}
\section{Numerical Results}
Consider the function $f(x)=\sin x^2$. We have plotted the results for this function by both operators ${B}_{n,p,q}(\cdot;x)$ and ${B}_{n,p,q}^*(\cdot;x)$ for $n=2, p=0.9, q=0.8$. It is clear from the figure that King type $(p, q)$-Baskakov operator gives better approximate to the curve.
\begin{figure}[h!]
\begin{center}
\includegraphics[scale=0.7]{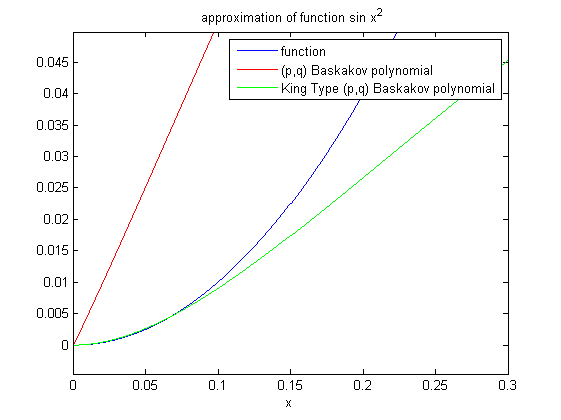}
\end{center}
\end{figure}

\begin{flushleft}
\textbf{Conflict of Interest} The authors declare that there is no conflict of interests regarding publication of this manuscript.
\end{flushleft}
\begin{flushleft}
\textbf{References}
\end{flushleft}

\end{document}